\newcommand{\w}{\omega}
\newcommand{\U}{\mathcal U}
\newcommand{\V}{\mathcal V}
\newcommand{\W}{\mathcal W}
\newcommand{\IN}{\mathbb N}
\newcommand{\IZ}{\mathbb Z}
\newcommand{\St}{\mathrm{St}}
\newcommand{\dowa}{{\downarrow}}
\newtheorem{theorem}{Theorem}[section]
\newtheorem{corollary}[theorem]{Corollary}
\newtheorem{claim}[theorem]{Claim}
\newtheorem{example}[theorem]{Example}
\title{Metrizability of Clifford topological semigroups}
\author{Taras Banakh, Oleg Gutik, Oles Potiatynyk, Alex Ravsky}
\address{Ivan Franko National University of Lviv, Ukraine}
\subjclass{22A15; 54E35; 54E18; 54D30}
\email{t.o.banakh@gmail.com, ovgutik@yahoo.com, oles2008@gmail.com, oravsky@mail.ru}
\begin{document}
\begin{abstract}
We prove that a topological Clifford semigroup $S$ is metrizable if and only if $S$ is an $M$-space and the set $E=\{e\in S:ee=e\}$ of idempotents of $S$ is a metrizable $G_\delta$-set in $S$. The same metrization criterion  holds also for any countably compact Clifford topological semigroup $S$.
\end{abstract}

\maketitle

\section*{Introduction}

According to the classical Birkhoff-Kakutani Theorem \cite[3.3.12]{AT}, a topological group $G$ is metrizable if and only if it is first countable. In this paper we establish metrization criteria for topological Clifford semigroups. In particular, in Theorem~\ref{t2.4} we prove that a topological Clifford semigroup is metrizable if and only if $S$ is an $M$-space and the subset of idempotents $E=\{e\in S:ee=e\}$ is a metrizable $G_\delta$-set in $S$. In Theorem~\ref{t3.1} we shall prove that a countably compact Clifford topological semigroup $S$ is metrizable if and only if $E$ is a metrizable $G_\delta$-set in $S$. All topological spaces considered in this paper are  regular.

\section{Clifford topological semigroups versus topological Clifford semigroups}

In this section we discuss the interplay between Clifford topological semigroups and topological Clifford semigroups. A {\em topological semigroup} is a topological space $S$ endowed with a continuous associative operation $\cdot:S\times S\to S$.

A topological semigroup $S$ is a {\em Clifford topological semigroup} if it is algebraically Clifford, i.e., $S$ is the union of groups. For each element $x\in S$ of a Clifford semigroup there is a unique element $x^{-1}\in S$ such that $xx^{-1}x=x$, $x^{-1}xx^{-1}=x^{-1}$ and $xx^{-1}=x^{-1}x$. This element $x^{-1}$ is called the {\em inverse} of $x$. The map $(\cdot)^{-1}:S\to S$, $(\cdot)^{-1}:x\mapsto x^{-1}$, is called the {\em inversion} on $S$. In Clifford topological semigroups the inversion is not necessarily continuous.

By a {\em topological Clifford semigroup} we mean a Clifford topological semigroup $S$ with continuous inversion $(\cdot)^{-1}:S\to S$. The simplest example of a Clifford topological semigroup which is not a topological Clifford semigroup is the half-line $[0,\infty)$ endowed with the operation of multiplication of real numbers. This semigroup is locally compact. Such examples cannot occur among compact topological semigroups because of the following classical result whose proof can be found in \cite{KW} or \cite{Kru}.

\begin{theorem}\label{t1.1} For each compact Clifford topological semigroup $S$ the inversion operation $(\cdot)^{-1}:S\to S$ is continuous, which implies that $S$ is a topological Clifford semigroup.
\end{theorem}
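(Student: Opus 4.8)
The plan is to show that the graph of the inversion map is closed in $S\times S$ and then to invoke the fact that a map into a compact Hausdorff space with closed graph is automatically continuous. Note that $S$ is Hausdorff by the standing regularity assumption, so these tools are available.

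First I would record the closed-graph criterion: for any space $X$ and any compact Hausdorff space $Y$, a map $f\colon X\to Y$ is continuous provided its graph $\Gamma_f=\{(x,f(x)):x\in X\}$ is closed in $X\times Y$. This follows because the projection $\pi\colon X\times Y\to X$ is a closed map whenever $Y$ is compact (the Kuratowski theorem, a consequence of the tube lemma), so that for every closed $C\subseteq Y$ the set $f^{-1}(C)=\pi\bigl(\Gamma_f\cap(X\times C)\bigr)$ is closed in $X$.

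Next I would identify the graph of the inversion $(\cdot)^{-1}$ using the uniqueness of inverses already recorded in the excerpt. A pair $(x,y)\in S\times S$ satisfies $y=x^{-1}$ if and only if the three equations $xyx=x$, $yxy=y$ and $xy=yx$ hold, since $x^{-1}$ is by definition the unique element of $S$ with these properties. Each of these equations describes the equalizer of two maps $S\times S\to S$ assembled from the continuous multiplication and the coordinate projections; because $S$ is Hausdorff, the set on which two continuous maps into $S$ agree is closed in $S\times S$. Intersecting the three resulting closed sets shows that the graph $\{(x,x^{-1}):x\in S\}$ is closed in $S\times S$.

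Finally, since $S$ is compact Hausdorff, the closed-graph criterion applies with $X=Y=S$ and $f=(\cdot)^{-1}$, giving continuity of the inversion; by definition this makes $S$ a topological Clifford semigroup. The only delicate point is the closed-graph criterion itself, whose proof rests on the closedness of the projection $\pi$ along the compact factor $Y=S$; the rest reduces to the routine observation that equalizers of continuous maps into a Hausdorff space are closed, combined with the uniqueness of inverses in a Clifford semigroup.
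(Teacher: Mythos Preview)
Your argument is correct. The characterization of the graph of $(\cdot)^{-1}$ as
\[
\Gamma=\{(x,y)\in S\times S:\ xyx=x,\ yxy=y,\ xy=yx\}
\]
is exactly the uniqueness statement recorded in the paper, and each of the three defining conditions is the equalizer of two continuous maps $S\times S\to S$ built from the multiplication and the projections; since $S$ is Hausdorff (by the blanket regularity assumption), $\Gamma$ is closed. The closed-graph criterion you state and justify via the closedness of the projection along a compact factor then yields continuity of the inversion. There is no hidden gap.

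As for comparison: the paper does not supply its own proof of this theorem; it merely cites the classical references \cite{KW} and \cite{Kru}. Your closed-graph argument is a clean, self-contained route that works for any compact Hausdorff Clifford topological semigroup and avoids any structure theory beyond the uniqueness of the commuting inverse. The original proofs in the cited references proceed somewhat differently (through the structure of compact semigroups and their idempotents), so what you gain here is brevity and transparency at the cost of relying on the algebraic uniqueness of $x^{-1}$ as a black box; that trade is entirely reasonable for the present purpose.
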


For countably compact Clifford topological semigroup the inversion operation is sequentially continuous.
Let us recall that a function $f:X\to Y$ between topological spaces is {\em sequentially continuous} if for each convergent sequence $(x_n)_{n=1}^\infty$ in $X$ the sequence $\big(f(x_n)\big)_{n=1}^\infty$ is convergent in $Y$ and $\lim\limits_{n\to\infty} f(x_n)=\lim\limits_{n\to\infty}f(x_n)$.


The following ``countably compact'' version of Theorem~\ref{t1.1} was proved by Gutik, Pagon, and Repov\v s in \cite{GR}.

\begin{theorem}\label{t1.2} For each countably compact Clifford topological semigroup $S$ the inversion operation $(\cdot)^{-1}:S\to S$ is sequentially continuous.
\end{theorem}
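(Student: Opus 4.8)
The plan is to fix a convergent sequence $x_n\to x$ in $S$ and to show that $x_n^{-1}\to x^{-1}$, which is precisely sequential continuity of inversion (since the limit to be matched is $x^{-1}=(\lim x_n)^{-1}$). As $S$ is Hausdorff (being regular), I would argue by contradiction: assuming $x_n^{-1}\not\to x^{-1}$, choose an open neighborhood $U\ni x^{-1}$ and, after passing to a subsequence which I relabel as $(x_n)$ (still with $x_n\to x$), arrange that $x_n^{-1}\notin U$ for every $n$. The whole argument then reduces to producing a single ``phantom inverse'' $y\neq x^{-1}$ of $x$ and contradicting the uniqueness of inverses in a Clifford semigroup.

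To produce $y$, I would invoke countable compactness in the form ``every sequence has a cluster point''. The set $S\setminus U$ is closed, hence itself countably compact, so the sequence $(x_n^{-1})$, which lies entirely in $S\setminus U$, has a cluster point $y\in S\setminus U$; in particular $y\neq x^{-1}$. The key structural point is that a cluster point of a net is the limit of some subnet, so I may pick a subnet $(x_{n_d}^{-1})_{d\in D}$ with $x_{n_d}^{-1}\to y$. Because a subnet of a convergent sequence converges to the same limit, the companion subnet satisfies $x_{n_d}\to x$.

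Next I would push these two convergent subnets through the continuous multiplication and use uniqueness of limits in the Hausdorff space $S$. Since $x_{n_d}x_{n_d}^{-1}x_{n_d}=x_{n_d}\to x$ and the map $(u,v)\mapsto uvu$ is continuous, I get $xyx=x$; since $x_{n_d}^{-1}x_{n_d}x_{n_d}^{-1}=x_{n_d}^{-1}\to y$, I get $yxy=y$; and since the Clifford identity gives $x_{n_d}x_{n_d}^{-1}=x_{n_d}^{-1}x_{n_d}$ termwise, the two nets $x_{n_d}x_{n_d}^{-1}$ and $x_{n_d}^{-1}x_{n_d}$ coincide and hence have the same limit, yielding $xy=yx$. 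Thus $y$ satisfies the three relations that characterize the inverse of $x$, so $y=x^{-1}$ by uniqueness, contradicting $y\neq x^{-1}$. This contradiction establishes $x_n^{-1}\to x^{-1}$.

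I expect the main obstacle to be exactly the gap, in a space that need not be first countable, between a cluster point of a sequence and a convergent subsequence: countable compactness supplies only the former. Replacing subsequences by subnets is what lets continuity of multiplication transport the defining relations of the inverse to the limit, and it is the device that makes the otherwise routine ``pass to the limit'' step legitimate.
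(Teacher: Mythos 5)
Your proof is correct. Note that there is no internal proof to compare it against: the paper does not prove Theorem~\ref{t1.2}, but quotes it from Gutik, Pagon and Repov\v{s} \cite{GR}. Your argument --- passing to a subsequence whose inverses avoid a fixed neighborhood $U$ of $x^{-1}$, using countable compactness to extract a cluster point $y$ of $(x_n^{-1})$ inside the closed set $S\setminus U$, realizing $y$ as a limit of a subnet, pushing the termwise identities $x_nx_n^{-1}x_n=x_n$, $x_n^{-1}x_nx_n^{-1}=x_n^{-1}$, $x_nx_n^{-1}=x_n^{-1}x_n$ through the jointly continuous multiplication to get $xyx=x$, $yxy=y$, $xy=yx$, and then invoking the algebraic uniqueness of the Clifford inverse to conclude $y=x^{-1}$, a contradiction --- is complete and self-contained, and it correctly isolates the one delicate point: in a space that need not be first countable, countable compactness yields only cluster points, not convergent subsequences, so subnets are exactly the right device for passing the defining relations to the limit.
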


There are also some other conditions guaranteeing that a countably compact Clifford topological semigroup is a topological Clifford semigroup. One of such conditions is the topological periodicity.

A topological semigroup $S$ is called {\em topologically periodic} if each element $x\in S$ is {\em topologically periodic} in the sense that for each neighborhood $O_x\subset S$ of $x$ there is an integer $n\ge 2$ with $x^n\in O_x$.

The proof of the following criterion can be found in \cite{GR}:

\begin{theorem}\label{t1.3} A countably compact Clifford topological semigroup $S$ is a topological Clifford semigroup if one of the following conditions is satisfied:
\begin{enumerate}
\item the space $S$ is Tychonoff and the square $S\times S$ is pseudocompact;
\item the square $S\times S$ is countably compact;
\item $S$ is sequential;
\item $S$ is topologically periodic and first countable at each idempotent $e\in E$.
\end{enumerate}
\end{theorem}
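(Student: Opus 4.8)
The plan is to deduce continuity of the inversion $\iota\colon S\to S$, $\iota(x)=x^{-1}$, from its \emph{sequential} continuity, granted by Theorem~\ref{t1.2}, together with a second structural fact valid in every Clifford topological semigroup: the graph of $\iota$ is closed in $S\times S$. Indeed, in any semigroup an element has at most one commuting generalized inverse, so
\[
\{(x,y)\in S\times S: xyx=x,\ yxy=y,\ xy=yx\}
\]
coincides with $\{(x,x^{-1}):x\in S\}$; being the common solution set of finitely many equations between continuous maps into the regular, hence Hausdorff, space $S$, it is closed. These two facts --- sequential continuity and closed graph --- are the workhorses for all four items.

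Condition (3) is then immediate: in a sequential space a map whose preimages of closed sets are sequentially closed is continuous, and sequential continuity of $\iota$ gives exactly this. For (1) I would pass to a compact semigroup envelope. Since $S$ is Tychonoff and $S\times S$ is pseudocompact, Glicksberg's theorem yields $\beta(S\times S)=\beta S\times\beta S$, so the multiplication $S\times S\to S\subseteq\beta S$ extends to a jointly continuous associative operation on the compact space $\beta S$. Viewing $\iota$ as a map $S\to\beta S$, its graph is still closed (same equational description, now read in the Hausdorff space $S\times\beta S$, using that $x^{-1}\in S$ remains the unique commuting inverse of $x$ even inside $\beta S$). A map into a compact Hausdorff space with closed graph is continuous, so $\iota\colon S\to\beta S$, and hence $\iota\colon S\to S$, is continuous. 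For (2) the square is countably compact and therefore pseudocompact, so the same compactification argument applies; the point requiring care is that (2) does not postulate the Tychonoff property, so one must either verify that the regular countably compact square still produces a compact semigroup envelope, or argue continuity directly from the closed graph and the countable compactness of $\Gamma=\{(x,x^{-1})\}$ (a closed, hence countably compact, subset of $S\times S$) via the projection $p_1\colon\Gamma\to S$.

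For (4) the idea is to express inverses locally through the \emph{continuous} power maps $x\mapsto x^{k}$. If $x$ lies in the maximal subgroup $H_e$ with identity $e$ and topological periodicity supplies $n\ge 2$ with $x^{n}$ close to $x$, then multiplying by $x^{-1}$ shows $x^{n-1}$ close to $e$ and hence $x^{-1}$ close to $x^{n-2}$; thus near $x$ the inversion is approximated by the continuous map $x\mapsto x^{n-2}$. First countability at the idempotents is then used to control the regime where $x$ approaches $E$ --- where the relevant exponent must be selected and the groups $H_e$ degenerate --- converting these local approximations, together with the sequential continuity of $\iota$, into genuine continuity.

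The main obstacle I anticipate is item (4): the exponent $n$ furnished by topological periodicity depends on both $x$ and the chosen neighbourhood, so the power approximations are only pointwise, and organizing them into a continuous global statement near the idempotent set is delicate; this is precisely where first countability at each $e\in E$ together with the sequential continuity from Theorem~\ref{t1.2} must be combined carefully. By contrast, items (1) and (3) are comparatively clean, and (2) is a variant of (1) whose only sensitive point is the absence of the Tychonoff hypothesis.
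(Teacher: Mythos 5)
First, a point of reference: the paper does not prove Theorem~\ref{t1.3} at all --- it cites \cite{GR} for the proof --- so the comparison here is with that external argument and with what your proposal establishes on its own. Your structural observations are correct: in any semigroup a commuting generalized inverse is unique (so the equational set you write down is exactly the graph of $\iota$), and this graph is closed. Item (3) is thereby done: a sequentially continuous map on a sequential space is continuous, so Theorem~\ref{t1.2} suffices. Item (1) is also done, and cleanly: Glicksberg's theorem turns $\beta S$ into a compact topological semigroup containing $S$ densely, the uniqueness argument read inside $\beta S$ shows the graph of $\iota\colon S\to\beta S$ is closed in $S\times\beta S$, and a map with closed graph into a compact Hausdorff space is continuous. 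This is essentially the known route of \cite{BG} and \cite{GR}.

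Items (2) and (4) are not proved, and the gaps are real. For (2), both of your fallback options fail. The closed-graph principle you would need --- closed graph plus \emph{countably compact} codomain implies continuity --- is false: the map $f\colon[0,\omega_1]\to[0,\omega_1)$ with $f(\alpha)=\alpha$ for $\alpha<\omega_1$ and $f(\omega_1)=0$ has closed graph, compact domain and countably compact codomain, yet is discontinuous. Equivalently, a continuous bijection such as $p_1\colon\Gamma\to S$ from a countably compact space need not be a homeomorphism, which is precisely the issue, not a tool. The compactification route is equally blocked: condition (2) deliberately omits the Tychonoff hypothesis, and regularity alone gives no embedding of $S$ into a compact Hausdorff space. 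Indeed, if $S$ were Tychonoff, then (2) would be a special case of (1), since countably compact squares are pseudocompact; the entire content of (2) is the non-Tychonoff case, which neither of your routes reaches. A genuinely semigroup-theoretic argument is required here, and that is what \cite{GR} supplies.

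For (4) you candidly admit you cannot close the argument, and the admission is accurate. Your power computation shows only that $x^{-1}$ is approximated by powers of $x$ --- a statement about the single orbit of $x$ --- whereas continuity of $\iota$ at $a\in H_e$ requires controlling $y^{-1}$ for all $y$ in a neighborhood of $a$ in $S$, and such $y$ lie in maximal subgroups $H_f$ with $f\ne e$, where the exponent furnished by periodicity of $a$ says nothing. What the hypotheses do give cheaply is this: sequential continuity (Theorem~\ref{t1.2}) plus first countability at $e$ yields continuity of $\iota$ at each idempotent, whence each $H_e$ is a paratopological group whose inversion is continuous at its identity and is therefore a topological group --- exactly how the present paper exploits these hypotheses in Claims~\ref{cl2} and~\ref{cl3} of Theorem~\ref{t3.1}. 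But the passage from ``inversion continuous at idempotents and on each maximal subgroup'' to ``inversion continuous on $S$'', which is where topological periodicity must do real work, is absent from your sketch; it is the core of the proof in \cite{GR}. In summary: (1) and (3) stand, (2) and (4) are genuine gaps.
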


For inverse Clifford topological semigroups this theorem was proved in \cite{BG}.

\section{Cardinal characteristics of topological Clifford semigroups}

In this section we evaluate some cardinal characteristics of topological Clifford semigroups.

For a topological space $X$ and a subset $A\subset X$ we shall be interested in the following  cardinal characteristics:
\begin{itemize}
\item the {\em weight} $w(X)$ of $X$, equal to the smallest infinite cardinal $\kappa$ for which there is a base $\mathcal B$ of the topology of $X$ with $|\mathcal B|\le\kappa$;
\item the {\em Lindel\"of number $l(X)$}, equal to the smallest infinite cardinal $\kappa$ such that each open cover $\U$ of $X$ has a subcover $\V$ of cardinality $|\V|\le\kappa$;
\item the {\em pseudocharacter} $\psi(A,X)$ of $A$ in $X$, equal to the smallest cardinality $|\mathcal U|$ of a family $\U$ of open subsets of $X$ such that $\cap\U=A$;
\item the {\em diagonal number} $\Delta(X)=\psi(\Delta_X,X\times X)$, equal to the pseudocharacter of the diagonal $\Delta_X=\{(x,x):x\in X\}$ in the square $X\times X$.
\end{itemize}

We say that a topological space $X$ has {\em $G_\delta$-diagonal} if $\Delta(X)\le\aleph_0$. It is easy to see that $\Delta(X)\le w(X)$. By a result of Arkhangelski \cite[II.\S1]{Ar}, each locally compact space $X$ has weight $w(X)=l(X)\cdot \Delta(X)$. In particular, each Lindel\"of locally compact space with $G_\delta$-diagonal is metrizable. For topological inverse Clifford semigroups the following theorem was proved in \cite[2.3(10)]{Ba}.

\begin{theorem}\label{t2.1} For a topological Clifford semigroup $S$ and its subset of idempotents  $E=\{e\in S:ee=e\}$ we have the upper bound $\Delta(S)\le\Delta(E)\cdot\psi(E,S)$.
\end{theorem}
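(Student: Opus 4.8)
The plan is to realize the diagonal $\Delta_S$ as the intersection of two pulled-back families, one governing the maximal subgroup to which an element belongs and one detecting equality inside a fixed maximal subgroup. To this end I would exploit two continuous maps built from the (continuous) inversion. First, the map $\pi\colon S\to E$, $\pi(x)=xx^{-1}$, which sends each $x$ to the identity $xx^{-1}=x^{-1}x$ of the maximal subgroup $H_{\pi(x)}\ni x$; it is continuous as a composition of $x\mapsto(x,x^{-1})$ with the multiplication, and its range lies in $E$. Second, the map $\phi\colon S\times S\to S$, $\phi(x,y)=xy^{-1}$, continuous for the same reason. The point is that $\pi(x)=\pi(y)$ detects that $x$ and $y$ lie in a common maximal subgroup, while, once they do, $\phi(x,y)\in E$ detects $x=y$.

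Next I would set up the two families. Let $\W$ be a family of open subsets of $E\times E$ with $\bigcap\W=\Delta_E$ and $|\W|\le\Delta(E)$, and let $\V$ be a family of open subsets of $S$ with $\bigcap\V=E$ and $|\V|\le\psi(E,S)$. Since $\pi\times\pi\colon S\times S\to E\times E$ and $\phi$ are continuous, the families $\{(\pi\times\pi)^{-1}(W):W\in\W\}$ and $\{\phi^{-1}(V):V\in\V\}$ consist of open subsets of $S\times S$, and (because preimages commute with intersections) their intersections are $(\pi\times\pi)^{-1}(\Delta_E)=\{(x,y):\pi(x)=\pi(y)\}$ and $\phi^{-1}(E)=\{(x,y):xy^{-1}\in E\}$, respectively.

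Finally I would prove the key identity
\[
\{(x,y):\pi(x)=\pi(y)\}\cap\{(x,y):xy^{-1}\in E\}=\Delta_S,
\]
which is the heart of the argument and the place where the Clifford structure enters. The inclusion $\supseteq$ is immediate. For $\subseteq$, suppose $\pi(x)=\pi(y)=e$ and $xy^{-1}\in E$; then $x,y\in H_e$, hence $xy^{-1}\in H_e$, and since $e$ is the unique idempotent of the group $H_e$ we get $xy^{-1}=e$, whence $x=xe=x(y^{-1}y)=(xy^{-1})y=ey=y$. Once this identity is established, the union of the two pulled-back families is a family of open subsets of $S\times S$ with intersection $\Delta_S$ and cardinality at most $|\W|+|\V|\le\Delta(E)+\psi(E,S)\le\Delta(E)\cdot\psi(E,S)$, yielding $\Delta(S)\le\Delta(E)\cdot\psi(E,S)$. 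The main obstacle is not topological but the algebraic bookkeeping in the displayed identity: one must confirm that $\pi$ genuinely separates distinct maximal subgroups, and that within a single subgroup the lone extra condition $xy^{-1}\in E$ is equivalent to $x=y$. The continuity of all the maps involved reduces, as indicated above, to continuity of the multiplication and of the inversion, the latter being available precisely because $S$ is a \emph{topological} Clifford semigroup.
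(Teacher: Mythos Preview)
Your proof is correct and follows essentially the same approach as the paper's: both use the continuous maps $x\mapsto xx^{-1}$ and $(x,y)\mapsto xy^{-1}$ to pull back witnessing families for $\Delta(E)$ and $\psi(E,S)$, and both rest (explicitly in your write-up, tacitly in the paper's ``observe that $\Delta_S=\bigcap\W$'') on the identity $\Delta_S=(\pi\times\pi)^{-1}(\Delta_E)\cap\phi^{-1}(E)$. The only cosmetic difference is that you take the union of the two pulled-back families, obtaining $\Delta(E)+\psi(E,S)$ sets, while the paper intersects them pairwise into sets $W_{U,V}$, obtaining $\Delta(E)\cdot\psi(E,S)$ sets---immaterial for infinite cardinals.
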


\begin{proof} By the definition of $\Delta(E)$, there is a family $\U$ of open neighborhoods of the diagonal $\Delta_E$ in $E\times E$ such that $|\U|=\Delta(E)$ and $\bigcap\U=\Delta_E$. By a similar reason, there is a family $\V$ of open neighborhoods of the set $E$ in $S$ such that $|\V|=\psi(E,S)$ and $\cap\V=E$.

It follows from the continuity of the multiplication and the inversion on $S$ that for any open sets $U\in\U$ and $V\in\V$ the set
$$W_{U,V}=\{(x,y)\in S\times S: (xx^{-1},yy^{-1})\in U,\;\;xy^{-1}\in V\}$$is open in $S\times S$. Consider the family $\W=\{W_{U,V}:U\in\U,\;V\in\V\}$ and observe that $\Delta_S=\bigcap\W$ and hence $\Delta(S)\le|\W|\le|\U|\cdot|\V|=\Delta(E)\cdot\psi(E,S)$.
\end{proof}

Combining Theorem~\ref{t2.1} with the equality $w(X)=l(X)\cdot\Delta(X)$ holding for each locally compact space $X$ (see \cite{Ar}), we get the following

\begin{corollary}\label{c2.2} Each locally compact topological Clifford semigroup $S$ has weight $w(S)=l(S)\cdot w(E)\cdot\psi(E,S)$.
\end{corollary}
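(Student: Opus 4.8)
The plan is to establish the two inequalities
$$w(S)\le l(S)\cdot w(E)\cdot\psi(E,S)\quad\text{and}\quad w(S)\ge l(S)\cdot w(E)\cdot\psi(E,S)$$
separately and then combine them into the desired equality. Throughout I will use that $S$ is regular and Hausdorff, and that $S$ is locally compact so that Arkhangelski's equality $w(S)=l(S)\cdot\Delta(S)$ applies.

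For the upper bound I would start from $w(S)=l(S)\cdot\Delta(S)$ and estimate the diagonal number via Theorem~\ref{t2.1}, which gives $\Delta(S)\le\Delta(E)\cdot\psi(E,S)$. Since $\Delta(E)\le w(E)$ (the general inequality noted in the excerpt), substitution yields $w(S)=l(S)\cdot\Delta(S)\le l(S)\cdot\Delta(E)\cdot\psi(E,S)\le l(S)\cdot w(E)\cdot\psi(E,S)$. This direction is essentially a bookkeeping exercise once Theorem~\ref{t2.1} and Arkhangelski's formula are in hand.

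For the reverse inequality I would show that each of the three factors is bounded above by $w(S)$; since all cardinals involved are infinite, the product of three such factors is again $\le w(S)$, giving $l(S)\cdot w(E)\cdot\psi(E,S)\le w(S)$. The bound $l(S)\le w(S)$ is the standard fact that the Lindel\"of number never exceeds the weight (reduce a cover using a base of size $w(S)$). The bound $w(E)\le w(S)$ holds because $E$ is a subspace of $S$ and the trace of a base of $S$ on $E$ is a base of $E$. The only nonobvious factor is $\psi(E,S)$, and this is the step I expect to be the main obstacle.

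To handle $\psi(E,S)\le w(S)$ I would first observe that $E$ is closed in $S$: it is the equalizer of the continuous maps $x\mapsto xx$ and $x\mapsto x$ into the Hausdorff space $S$. Then, for a closed set $A$ in a regular space $X$ with base $\mathcal B$ of cardinality $w(X)$, I would write $A=\bigcap\{X\setminus\overline B: B\in\mathcal B,\ \overline B\cap A=\emptyset\}$. Indeed, each point $x\notin A$ lies, by regularity, in some basic set $B$ whose closure misses $A$, so $x\notin X\setminus\overline B$ while $A\subseteq X\setminus\overline B$; this exhibits $A$ as the intersection of at most $w(X)$ open supersets of itself, whence $\psi(A,X)\le w(X)$. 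Applying this with $A=E$ and $X=S$ yields $\psi(E,S)\le w(S)$ and completes the lower bound. Combining the two inequalities gives $w(S)=l(S)\cdot w(E)\cdot\psi(E,S)$.
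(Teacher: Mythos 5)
Your proof is correct and follows essentially the same route as the paper, which obtains the corollary by combining Theorem~\ref{t2.1} with Arkhangelski's equality $w(X)=l(X)\cdot\Delta(X)$ for locally compact spaces (together with $\Delta(E)\le w(E)$) to get the upper bound. The reverse inequality, which the paper treats as standard and leaves implicit, is exactly the routine verification you supply: $l(S)\le w(S)$, $w(E)\le w(S)$, and $\psi(E,S)\le w(S)$ via closedness of $E$ and regularity of $S$.
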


This corollary implies the following metrizability criterion:

\begin{corollary} A Lindel\"of locally compact topological Clifford semigroup $S$ is metrizable if and only if the set of idempotents $E$ is a metrizable $G_\delta$-set in $S$.
\end{corollary}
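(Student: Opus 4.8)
The plan is to reduce metrizability to countability of the weight and then invoke the weight formula of Corollary~\ref{c2.2}. The underlying observation is that for a regular Lindel\"of space $X$ the three conditions ``$X$ is metrizable'', ``$X$ is second countable'', and ``$w(X)\le\aleph_0$'' are all equivalent: a metrizable Lindel\"of space is separable and hence second countable, whereas a regular second countable space is metrizable by the Urysohn metrization theorem. Since $S$ is Lindel\"of, $l(S)=\aleph_0$, and Corollary~\ref{c2.2} gives
$$w(S)=l(S)\cdot w(E)\cdot\psi(E,S)=\aleph_0\cdot w(E)\cdot\psi(E,S),$$
so that $w(S)\le\aleph_0$ holds if and only if both $w(E)\le\aleph_0$ and $\psi(E,S)\le\aleph_0$. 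The whole corollary will then follow once these two cardinal inequalities are matched with the two hypotheses on $E$.

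Before the main dichotomy I would record two facts about the idempotent set. First, $E=\{x\in S:xx=x\}$ is the coincidence set of the continuous maps $x\mapsto xx$ and $x\mapsto x$, hence closed in the Hausdorff space $S$; being closed in a Lindel\"of space, $E$ is itself Lindel\"of. Applying the equivalence above to the regular Lindel\"of subspace $E$, I get that $E$ is metrizable if and only if $w(E)\le\aleph_0$. Second, directly from the definition of the pseudocharacter, $\psi(E,S)\le\aleph_0$ holds precisely when $E$ is a $G_\delta$-set in $S$. Thus the two factors $w(E)$ and $\psi(E,S)$ in the displayed formula encode exactly metrizability of $E$ and the $G_\delta$-property of $E$, respectively.

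With this in place the two implications are short. For the forward direction, if $S$ is metrizable then its subspace $E$ is metrizable, and being closed in the metrizable space $S$ it is a $G_\delta$-set, so $E$ is a metrizable $G_\delta$-set in $S$. For the converse, if $E$ is a metrizable $G_\delta$-set in $S$, then $w(E)\le\aleph_0$ (as $E$ is metrizable and Lindel\"of) and $\psi(E,S)\le\aleph_0$ (as $E$ is $G_\delta$), so the displayed formula yields $w(S)\le\aleph_0$ and the regular space $S$ is metrizable. The argument is essentially a bookkeeping of definitions once Corollary~\ref{c2.2} is available; the one point demanding genuine care is the passage between metrizability and countable weight, which I expect to be the main (if mild) obstacle, since it relies on the Lindel\"of property not only of $S$ but also of its closed subspace $E$.
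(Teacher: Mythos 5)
Your proof is correct and follows essentially the same route as the paper, which derives this corollary directly from the weight formula $w(S)=l(S)\cdot w(E)\cdot\psi(E,S)$ of Corollary~2.2. The details you supply (metrizability $\Leftrightarrow$ countable weight for regular Lindel\"of spaces, $E$ closed hence Lindel\"of, and $\psi(E,S)\le\aleph_0\Leftrightarrow E$ is $G_\delta$) are exactly the bookkeeping the paper leaves implicit.
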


In fact, this metrizability criterion holds more generally for topological Clifford semigroups which are $M$-spaces. The class of $M$-spaces includes all metrizable spaces, all Lindel\"of locally compact spaces, and all countably compact spaces, see \cite[\S3.5]{Gru}. Let us recall \cite[3.5]{Gru} that a topological space $X$ is called an {\em $M$-space} if
there is a sequence $(\U_n)_{n\in\w}$ of open covers of $X$ such that each cover $\U_{n+1}$ star refines the cover $\U_{n+1}$ and for any point $x\in X$, any sequence $x_n\in \St(x,\U_n)$, $n\in\w$, has a cluster point in $X$. By a characterization theorem of Morita \cite{Mor} (see also \cite[3.6]{Gru}), a topological space $X$ is an M-space if and only if it admits a closed continuous map $f:X\to Y$ onto a metrizable space $Y$ with countably compact preimages $f^{-1}(y)$ of points $y\in Y$.
By \cite[3.8]{Gru}, each $M$-space with $G_\delta$-diagonal is metrizable. This fact combined with Theorem~\ref{t2.1} implies the following metrization theorem for topological Clifford semigroups.

\begin{theorem}\label{t2.4} A topological Clifford semigroup $S$ is metrizable if and only if $S$ is an $M$-space and $E$ is a metrizable $G_\delta$-set in $S$.
\end{theorem}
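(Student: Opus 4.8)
The plan is to prove both implications, with the nontrivial direction being the ``if'' part. The ``only if'' direction is essentially immediate: if $S$ is metrizable, then $S$ is first countable and paracompact, and metrizable spaces are well known to be $M$-spaces (this is recorded in the class inclusions cited from \cite[\S3.5]{Gru}). Moreover, in a metrizable space every closed set is a $G_\delta$-set, and the idempotent set $E$ is closed in $S$ because $E=\{x\in S: xx=x\}$ is the equalizer of the continuous maps $x\mapsto xx$ and the identity (using that $S$ is Hausdorff); hence $E$ is a metrizable $G_\delta$-set. So the forward implication requires only that $E$ be closed, which follows from continuity of multiplication.

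For the ``if'' direction, suppose $S$ is an $M$-space and $E$ is a metrizable $G_\delta$-set in $S$. The strategy is to show that $S$ has $G_\delta$-diagonal and then invoke the cited fact from \cite[3.8]{Gru} that every $M$-space with $G_\delta$-diagonal is metrizable. To get the $G_\delta$-diagonal I would apply Theorem~\ref{t2.1}, which gives the bound $\Delta(S)\le\Delta(E)\cdot\psi(E,S)$. It therefore suffices to check that both factors on the right are countable. Since $E$ is assumed metrizable, $E$ has a $G_\delta$-diagonal, so $\Delta(E)\le\aleph_0$. Since $E$ is assumed to be a $G_\delta$-set in $S$, by definition there is a countable family of open subsets of $S$ whose intersection is $E$, which means exactly $\psi(E,S)\le\aleph_0$. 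Consequently $\Delta(S)\le\aleph_0\cdot\aleph_0=\aleph_0$, so $S$ has $G_\delta$-diagonal.

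Having established that $S$ is an $M$-space with $G_\delta$-diagonal, the metrizability of $S$ follows directly from \cite[3.8]{Gru}. The main point to be careful about is verifying the two hypotheses feeding into Theorem~\ref{t2.1}: that ``$E$ metrizable'' delivers $\Delta(E)\le\aleph_0$ (via the standard fact $\Delta(X)\le w(X)$ noted in the excerpt) and that ``$E$ is a $G_\delta$-set'' is precisely the statement $\psi(E,S)\le\aleph_0$. Neither of these is hard, but they are the places where the hypotheses are actually consumed. I expect no genuine obstacle here, since the heavy lifting has already been done in Theorem~\ref{t2.1} and in the external results of Morita and Arkhangelski; the proof is really an assembly of these pieces, and the only direction that takes any argument at all is reducing metrizability to the $G_\delta$-diagonal property.
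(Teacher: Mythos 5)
Your proof is correct and follows essentially the same route as the paper: the paper derives Theorem~\ref{t2.4} precisely by combining Theorem~\ref{t2.1} (which gives $\Delta(S)\le\Delta(E)\cdot\psi(E,S)\le\aleph_0$ under the hypotheses) with the cited fact \cite[3.8]{Gru} that every $M$-space with $G_\delta$-diagonal is metrizable, the converse direction being the same routine observation that $E$ is closed, hence a metrizable $G_\delta$-set in the metrizable $M$-space $S$.
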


For topological inverse Clifford semigroups this metrizability criterion was proved in \cite{Ba}. The following example shows that Theorem~\ref{t2.4} does not hold without the M-space assumption.

\begin{example} There is a non-metrizable countable commutative topological Clifford semigroup $S$ such that its set of idempotents $E$ is compact metrizable and open in $S$.
\end{example}

\begin{proof} Let $X$ be any countable topological space with a unique non-isolated point $x_0$. Define a continuous semilattice operation $\wedge$ on $X$ letting
$$x\wedge y=\begin{cases}x&\mbox{if $x=y$}\\
x_0&\mbox{if $x\ne y$}.\end{cases}
$$
Let $\tilde X$ be the space $X$ endowed with the topology of one-point compactification of the countable discrete space $X'=X\setminus\{x_0\}$.

Consider the commutative Clifford semigroup $S=X\times \IZ$ endowed with the topology of the topological sum $(X\times (\IZ\setminus\{0\})\oplus(\tilde X\times\{0\})$. Here $\IZ$ is the discrete additive group of integers. It is easy to check that $S$ is a topological Clifford semigroup whose set of idempotents $E=\tilde X\times\{0\}$ is metrizable, compact  and open in $S$. The semigroup $S$ is metrizable if and only if so is the space $X$.
\end{proof}

\section{Metrizability of countably compact Clifford topological semigroups}

Theorem~\ref{t2.4} implies that a countably compact topological Clifford semigroup $S$ is metrizable if and only if the set $E$ of idempotents of $S$ is a metrizable $G_\delta$-set in $S$. In this section we shall generalize this metrization criterium to countably compact Clifford topological semigroups.

\begin{theorem}\label{t3.1} A countably compact Clifford topological semigroup $S$ is metrizable if and only if the set $E$ of idempotents of $S$ is a metrizable $G_\delta$-set in $S$.
\end{theorem}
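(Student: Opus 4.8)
The plan is to reduce the nontrivial (``if'') direction to the already-established topological case. The ``only if'' direction is routine: if $S$ is metrizable then the continuous map $S\to S\times S$, $x\mapsto(xx,x)$, shows that $E$ is closed in $S$, and a closed subset of a metrizable space is a metrizable $G_\delta$-set. So assume that $E$ is a metrizable $G_\delta$-set in $S$. Since $S$ is countably compact and $E$ is closed in $S$, the space $E$ is countably compact and metrizable, hence compact and metrizable; in particular $\Delta(E)\le\aleph_0$, and $\psi(E,S)\le\aleph_0$ because $E$ is $G_\delta$ in $S$. The strategy is to prove that under these hypotheses the inversion $(\cdot)^{-1}$ is continuous, so that $S$ becomes a countably compact topological Clifford semigroup; being countably compact it is an $M$-space, and then Theorem~\ref{t2.4} finishes the proof. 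To obtain continuity of the inversion I would verify condition (4) of Theorem~\ref{t1.3}, that is, that $S$ is topologically periodic and first countable at each idempotent.

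First I would show that $S$ is first countable at every $e\in E$. Using that $E$ is a compact metrizable $G_\delta$-set in the regular space $S$, choose a decreasing sequence of open sets $V_n\supseteq\overline{V_{n+1}}$ with $\bigcap_nV_n=\bigcap_n\overline{V_n}=E$, and a decreasing neighbourhood base $(B_n)$ of $e$ in $E$ with $\bigcap_nB_n=\{e\}$; by regularity separate $e$ from the closed set $E\setminus B_n$ to get decreasing open sets $W_n\ni e$ in $S$ with $E\cap\overline{W_n}\subseteq B_n$. Then $\{V_n\cap W_m:n,m\in\IN\}$ is a neighbourhood base at $e$: if some neighbourhood $O$ of $e$ contained no $V_n\cap W_n$, pick $x_n\in(V_n\cap W_n)\setminus O$; any cluster point $p$ of $(x_n)$ lies in $\bigcap_n\overline{V_n}=E$ and in $\bigcap_n(E\cap\overline{W_n})\subseteq\bigcap_nB_n=\{e\}$, so $p=e$, contradicting $x_n\notin O$ for the open set $O\ni e$.

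Next I would prove topological periodicity. For $x\in S$ put $e=xx^{-1}$ and consider the sequentially continuous map $\pi\colon S\to E$, $\pi(y)=yy^{-1}$; the powers $x^n$ lie in the maximal subgroup $H_e$, and if $x$ has finite order the claim is trivial, so assume otherwise. The key reduction is: once one knows $e\in\overline{\{x^n:n\ge1\}}$, first countability at $e$ yields a subsequence $x^{k_n}\to e$, and then $x^{k_n+1}=x\,x^{k_n}\to xe=x$ with $k_n+1\ge2$, which is exactly topological periodicity of $x$. Moreover, sequential continuity of the inversion (Theorem~\ref{t1.2}) makes a single convergent subsequence of powers suffice: if $x^{n_k}\to p$ then $\pi(x^{n_k})=e\to\pi(p)$ forces $p\in H_e$, and $x^{\,n_{k+1}-n_k}=x^{n_{k+1}}(x^{n_k})^{-1}\to pp^{-1}=e$ produces powers tending to $e$.

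Thus the main obstacle is to produce a convergent subsequence of $(x^n)$ --- equivalently, to show $e\in\overline{\{x^n:n\ge1\}}$ --- from countable compactness alone, where one a priori obtains only cluster points rather than sequential limits. I would attack this through Theorem~\ref{t1.2}: the closure $\overline{\{x^n:n\ge1\}}$ is a countably compact commutative subsemigroup of $S$ on which the inversion is sequentially continuous, and I would argue, as for compact subsemigroups of groups, that it is in fact a subgroup and hence contains its identity $e$. Once topological periodicity and first countability at the idempotents are established, Theorem~\ref{t1.3}(4) makes $S$ a topological Clifford semigroup, and Theorem~\ref{t2.4} yields metrizability.
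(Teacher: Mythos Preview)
Your overall strategy coincides with the paper's: establish first countability at each idempotent and topological periodicity, then invoke Theorem~\ref{t1.3}(4) and Theorem~\ref{t2.4}. Your proof of first countability at idempotents is fine and essentially matches the paper's. The gap is in the argument for topological periodicity.

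You correctly isolate the crux: for $e=xx^{-1}$ one must show $e\in\overline{\{x^n:n\ge 1\}}$, or equivalently produce some convergent subsequence of $(x^n)$. But your proposed attack---arguing that the countably compact commutative subsemigroup $T=\overline{\{x^n\}}$ is a group ``as for compact subsemigroups of groups''---does not go through. The classical Numakura-type results depend essentially on compactness; mere countable compactness together with sequential continuity of the inversion is not enough, because cluster points of $(x^n)$ in $S$ need not lie in $H_e$ (we do not yet know $H_e$ is closed), so you cannot use the metrizability of $H_e$ to upgrade cluster points to sequential limits, and without that the circle does not close.

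The paper supplies the missing idea. Let $A$ be the closed set of accumulation points of $(a^n)$, a commutative subsemigroup. Using Zorn's lemma and the compactness of $E$, one shows that $\pi(A)\subset E$ has a \emph{minimal} element $e_0$; choose $b\in A$ with $\pi(b)=e_0$. Any accumulation point $c$ of $(b^n)$ lies in $A\cap e_0Se_0$, hence $\pi(c)\le e_0$, and minimality forces $\pi(c)=e_0$, i.e.\ $c\in H_{e_0}$. Now one is inside the maximal subgroup $H_{e_0}$, which the paper separately proves (its Claim~\ref{cl3}) to be a \emph{metrizable topological group}; there cluster points are sequential limits, and your difference-of-powers trick gives $e_0\in A$. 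First countability at $e_0$ then yields $a^{m_k}\to e_0$, continuity of the inversion at $e_0$ gives $a^{-m_k}\to e_0$, whence $aa^{-1}=e_0$ and $a^{m_k+1}\to a$. The minimality argument on $\pi(A)$, together with the intermediate fact that each $H_f$ is a metrizable topological group, is precisely what your outline lacks.
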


\begin{proof} Assume that $E$ is a metrizable $G_\delta$-set in $S$. The continuity of the semigroup operation on $S$ implies that the set of idempotents $E=\{e\in S:ee=e\}$ is closed in $S$ and hence is countably compact. Being metrizable, the countably compact space $E$ is compact.

\begin{claim}\label{cl1} The space $S$ is first countable at each point $e\in E$.
\end{claim}

\begin{proof} Using the fact that $E$ is a metrizable $G_\delta$-set in $S$, it can be shown that each singleton $\{e\}\subset E$ is a $G_\delta$-set in $S$. Using the regularity of the space $S$, we can choose a countable decreasing sequence $(U_n)_{n\in\w}$ of open subsets of $S$ such that $\bigcap_{n\in\w}\overline{U}_n=\{e\}$.
The countable compactness of $S$ implies that for each neighborhood $U\subset S$ of $e$ there is $n\in\w$ with $\overline{U}_n\subset U$. This means that $\{U_n\}_{n\in\w}$ is a neighborhood base at $e$, so $S$ is first countable at $e$.
\end{proof}

\begin{claim}\label{cl2} The inversion $(\cdot)^{-1}:S\to S$ is continuous at each idempotent $e\in E$.
\end{claim}

\begin{proof} By Theorem~\ref{t1.2}, the inversion operation is sequentially continuous at hence is continuous at each point $x\in S$ having countable neighborhood base. In particular, the inversion is continuous at each idempotent $e\in E$.
\end{proof}

\begin{claim}\label{cl3} For every idempotent $e\in E$ the maximal subgroup $$H_e=\{x\in S:xx^{-1}=e\}$$
is a metrizable topological group.
\end{claim}

\begin{proof} The continuity of the semigroup operation on $S$ implies that $H_e$ is a paratopological group.  By Claim~\ref{cl2}, the inversion of $S$ is continuous at the idempotent $e$. Consequently, the inversion of the paratopological group $H_e$ is continuous at $e$ and so $H_e$ is a topological group, see \cite[1.3.12]{AT}. Since $S$ is first countable at $e$, the paratopological group $H_e$ is first countable. By the Birkhoff-Kakutani Theorem \cite[3.3.12]{AT}, the topological group $H_e$ is metrizable.
\end{proof}

\begin{claim}\label{cl4} The semigroup $S$ is topologically periodic.
\end{claim}

\begin{proof} Take any point $a\in S$ and denote by $A$ the (closed) set of accumulation points of the sequence $\{a^n:n\in\IN\}$ in $S$. The continuity of the semigroup operation implies that $A$ is a closed commutative subsemigroup in $S$.
Let $\pi:S\to E$, $\pi:x\mapsto xx^{-1}=x^{-1}x$, denote the projection of $S$ onto the set $E$ of idempotents of $S$. Observe that the map $\pi$ is not necessarily continuous.

We claim that the projection $\pi(A)\subset E$ has a minimal element with respect to the natural partial order on $E$ defined by $x\le y$ iff $xy=x=yx$. By Zorn Lemma, it suffices to prove that each linearly ordered subset $L$ of $\pi(A)$ has a lower bound in $\pi(A)$. For each element $\lambda\in L$ consider its lower cone $${\dowa}\lambda=\{e\in E:e\le \lambda\}=\{e\in E:\lambda e\lambda=e\}$$ and observe that it is a closed subset of $E$. Next, consider the closed subset
$${\dowa}L=\bigcap_{\lambda\in L}{\dowa}\lambda.$$ The compactness of $E$ implies that each open neighborhood of ${\dowa}L$ contain some lower cone ${\dowa}\lambda$, $\lambda\in L$. Since ${\dowa}L$ is a closed $G_\delta$-set in $E$, there is a decreasing sequence $\{\lambda_n\}_{n\in\w}\subset L$ such that ${\dowa}L=\bigcap_{n\in\w}{\dowa}\lambda_n$.
For every $n\in\w$ choose a point $a_n\in A$ with $\pi(a_n)=\lambda_n$ and observe that $\lambda_na_n\lambda_n=(a_na_n^{-1})a_n(a_n^{-1}a_n)=a_n$ and hence $a_n$ lies in the closed subset
$$\lambda_nS\lambda_n=\{x\in S:\lambda_nx\lambda_n=x\}$$of $S$. Then for every $m\ge n$ we get $\lambda_m\le\lambda_n$ and hence $a_m\in\lambda_mS\lambda_m\subset \lambda_nS\lambda_n$.

By the countable compactness of $A$, the sequence $(a_n)_{n\in\w}$ has an accumulation point $a_\infty$ which lies in the closed subset $\bigcap_{n\in\w}\lambda_n S\lambda_n$ of $S$. Let $\lambda_\infty=\pi(x_\infty)\in\pi(A)$. We claim that $\lambda_\infty\in\bigcap_{n\in\w}{\dowa}\lambda_n={\dowa}L$, which means that $\lambda_\infty$ is a lower bound for $L$ in $\pi(A)$.

Indeed, for every $n\in\w$, the inclusion $a_\infty\in\lambda_n S\lambda_n$ implies
$$\lambda_\infty\lambda_n=a_\infty^{-1}a_\infty\lambda_n=
a_\infty^{-1}a_\infty=\lambda_\infty$$ and similarly $\lambda_n\lambda_\infty=\lambda_\infty$, which means that $\lambda_\infty\le\lambda_n$.

Thus $\lambda_\infty$ is a lower bound of the chain $L$ and by Zorn Lemma, the set $\pi(A)$ has a minimal element $e\in E$. Let $b\in A$ be any element with $\pi(b)=e$. We claim that $b$ is topologically periodic. By the countable compactness of $S$, the sequence $\{b^n:n\in\IN\}$ has an accumulation point $c$, which belongs to the closed subsemigroup $A\cap eSe$ of $S$. It follows that $e\pi(c)=ecc^{-1}=cc^{-1}=\pi(c)$ and $\pi(c)e=c^{-1}ce=c^{-1}c=\pi(c)$, which means that $\pi(c)\le e$. By the minimality of $e$  in the poset $\pi(A)$, we get $\pi(c)=e$. Therefore, the point $c$ is an accumulation point of the sequence $(b^n)_{n=1}^\infty$ in the metrizable topological group $H_e$. Then be can choose an increasing number sequence $(n_k)_{k\in\w}$ such that $\lim_{k\to\infty} (n_{k+1}-n_k)=\infty$ and the sequence $(b^{n_k})_{k\in\w}$ converges to $c$. Then the sequence $(b^{n_{k+1}-n_k})_{k\in\w}$ tends to the idempotent $e=cc^{-1}$ and lies in the subsemigroup $A$.

Now we see that the idempotent $e$ belongs to the closed set $A$ of accumulating points of the sequence $(a^n)_{n\in\w}$. Since $S$ is first countable at $e$, there is an increasing number sequence $(m_k)_{k\in\w}$ such that the sequence $(a^{m_k})_{k\in\w}$ tends to $e$.  By the continuity of the inversion operation at $e$, the sequence $(a^{-m_k})_{k\in\w}$ also tends to $e$ and hence $e=ee^{-1}=\lim_{k\to\infty}a^{m_k}a^{-m_k}=aa^{-1}$. Since $\lim_{k\to\infty}a^{m_k}=e=aa^{-1}$, the sequence $(a^{m_k+1})_{k\in\w}$ tends to $aa^{-1}a=a$, witnessing that the element $a$ is topologically periodic.
\end{proof}

Claims~\ref{cl1}, \ref{cl4} and Theorem~\ref{t1.3}(4) imply

\begin{claim} $S$ is a topological Clifford semigroup.
\end{claim}

Being countably compact, the space $S$ is $M$-space and then it is metrizable by Theorem~\ref{t2.4}.
\end{proof}

\end{document}